\theoremstyle{plain}
\newtheorem{theorem}{Theorem}[section]
\newtheorem{thm}[theorem]{Theorem}
\newtheorem{lemma}[theorem]{Lemma}
\newtheorem{proposition}[theorem]{Proposition}
\newtheorem{prop}[theorem]{Proposition}
\newtheorem{cor}[theorem]{Corollary}
\newcommand{\C}{\mathbb{C}}
\newcommand{\Z}{\mathbb{Z}}
\newcommand{\umax}{u_{\text{max}}}
\begin{document}

\title{The parabolic coset structure of Bruhat intervals in Coxeter groups}
\author{Suho Oh}
\email{s\_o79@txstate.edu}

\author{Edward Richmond}
\email{edward.richmond@okstate.edu}

\begin{abstract}
In this paper, we study the decomposition of Bruhat intervals in a Coxeter group with respect to cosets of a parabolic subgroup.  Our main result is that the intersection of a lower Bruhat interval with a parabolic coset contains a unique maximal element.  As an application, we give a decomposition formula for the Poincar\'{e} polynomial of a Coxeter group element.  We also show that the fibers of standard parabolic projection maps on Schubert varieties are themselves Schubert varieties.
\end{abstract}

\maketitle

\section{Introduction}\label{section:intro}

The Bruhat order on Coxeter groups plays an important role in the representation theory of Lie groups and the geometry of associated flag varieties and Schubert varieties. In this paper, we study the decomposition of lower Bruhat intervals with respect to the cosets of given a parabolic subgroup.  When considering the identity coset (i.e. the parabolic subgroup itself), it was proved by van den Hombergh in  that the intersection of a lower Bruhat interval with a parabolic subgroup has unique maximal element \cite{vdHombergh74}.  In \cite{BP05}, Billey and Postnikov showed that if the maximal element satisfies a certain property (now called a Billey-Postinkov decomposition), then the corresponding Poincar\'{e} polynomial of Bruhat interval factors nicely with respect to the parabolic subgroup.  It was later shown by Slofstra and the second author that Billey-Postinkov decompositions can be characterized geometrically as a fiber bundle structure on the Schubert variety corresponding to the interval \cite{Richmond-Slofstra16}.  Billey-Postinkov decompositions have also been used the study of hyperplane arrangements, permutation pattern avoidance, smooth Schubert varieties, and self-dual Bruhat intervals \cite{Alland-Richmod18, Gaetz-Gao20, MOY19, Richmond-Slofstra17}.

In contrast to the identity coset, the structure of lower Bruhat intervals intersected with an arbitrary parabolic coset is much less is understood.  In \cite{Richmond-Slofstra16}, Slofstra and the second author show that these intersections describe the fibers of a projection map between partial flag varieties restricted to a Schubert variety.  The main result of this paper is that lower Bruhat intervals intersected with any coset of a parabolic subgroup still contain a unique maximal element.  As a consequence, the fibers of the projection maps on Schubert varieties are themselves Schubert varieties.  We also give a new recursive formula for the Poincar\'{e} polynomial of a lower Bruhat interval.

\section{Main results}\label{section:main_results}
Let $W$ denote a Coxeter group with generating set $S$ and identity element $e$.  Let $\leq$ denote the Bruhat partial order on $W$ and let $\ell:W\rightarrow\Z_{\geq 0}$ denote the length function with respect to $S$.   For basic properties of Coxeter groups, we refer the reader to \cite{Bjorner-Brenti05} or \cite{Humphreys90}.  For any $J\subseteq S$, we can decompose
$$W=W^J\cdot W_J$$
where $W_J$ is the subgroup generated by $J$ (these are called parabolic subgroups) and $W^J$ denotes the set of minimal length coset representatives of the right-coset space $W/W_J$.  We study lower Bruhat intervals with respect to this decomposition.  More precisely, let $w\in W$ and define the lower interval $[e,w]:=\{u\in W \ |\ u\leq w\}$.  Consider the decomposition
$$[e,w]=\bigsqcup_{x\in W^J} [e,w]\cap xW_J.$$
Observe that $[e,w]\cap xW_J$ is nonempty if and only if $x\leq w$.  Moreover, $[e,w]\cap xW_J$ is a lower order ideal in $xW_J$ with unique minimal element $x$.  Our main result is there is also a unique maximal element.
\begin{thm}\label{thm:main}
Let $w\in W$ and $x\in W^J$ such that $x\leq w$.  Then $[e,w]\cap xW_J$ has a unique maximal element with respect to Bruhat order.
\end{thm}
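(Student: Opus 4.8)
The plan is to transport the statement into the parabolic subgroup $W_J$ and induct on $\ell(w)$. Since $x\in W^J$ we have $\ell(xv)=\ell(x)+\ell(v)$ for every $v\in W_J$, and hence $v\mapsto xv$ is an isomorphism of posets from $W_J$, with its Bruhat order, onto $xW_J$, with the order induced from $W$ (see \cite{Bjorner-Brenti05}). Under this identification $[e,w]\cap xW_J$ corresponds to the lower order ideal $I(x,w):=\{v\in W_J\mid xv\le w\}$ of $W_J$, which is finite because $[e,w]$ is. A finite poset has a unique maximal element precisely when any two of its elements have a common upper bound inside it, so it suffices to show $I(x,w)$ is upward directed, equivalently that $I(x,w)=[e,v^{*}]$ inside $W_J$ for some $v^{*}$.

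I would induct on $\ell(w)$; the case $\ell(w)=0$ is immediate and the case $x=e$ is van den Hombergh's theorem \cite{vdHombergh74}, so assume $\ell(w)\ge 1$ and $x\ne e$. Fix a right descent $s$ of $w$ and write $w'=ws$. The facts I would use, all standard consequences of the subword property and the lifting property \cite{Bjorner-Brenti05}, are: (i) $y\le w\iff ys\le w$, and $[e,w]=[e,w']\cup\{ys\mid y\in[e,w']\}$; (ii) if $a\le b$ and $s$ lies in both, or in neither, of the right descent sets of $a$ and $b$, then $as\le bs$, and if $s$ is not a right descent of $a$ then $a\le bs$; (iii) $u\mapsto u^{J}\in W^{J}$ is order preserving, and for $x\in W^{J}$ one has $x\le w\iff x\le w^{J}$. \emph{Case $s\in J$.} Here $xW_J$ and $[e,w]$ are both stable under right multiplication by $s$ (the latter by (i)), hence so is $[e,w]\cap xW_J$; and $(w')^{J}=w^{J}$, so $x\le w'$ by (iii), and by induction $C:=[e,w']\cap xW_J$ has a unique maximal element $p$. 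Since $[e,w]\cap xW_J=C\cup Cs$, one checks using (i)--(ii) that if $s$ is a right descent of $p$ then $Cs\subseteq C$ and $p$ is the unique maximal element, while otherwise every element of $C\cup Cs$ lies below $ps$, which is then the unique maximal element. This case is routine, so we may assume $w$ has no right descent in $J$, i.e.\ $w\in W^{J}$.

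\emph{Case $s\notin J$.} This is the main obstacle. Write $[e,w]\cap xW_J=\big([e,w']\cap xW_J\big)\cup\{u\in xW_J\mid u\le w,\ u\not\le w'\}$; by (i), each $u$ in the second set has $s$ as a right descent and satisfies $us\le w'$. The first set is controlled by induction (when nonempty), but for $u$ in the second set the element $us$ lies in $\{ys\mid y\in xW_J\}$, which as a subset of $W$ typically meets several distinct cosets of $W_J$, so one cannot directly apply the inductive hypothesis for $w'$ on a single coset. One line of attack: using (i), (ii) and $s\notin J$, the map $u\mapsto\min\{u,us\}$ is an injective, order-preserving map from $[e,w]\cap xW_J$ into $[e,w']$ whose image is the explicit set $\{b\le w'\mid \ell(b)<\ell(bs),\text{ and }b\in xW_J\text{ or }bs\in xW_J\}$, and the problem reduces to showing this set has a unique maximal element; one would attack that by applying the inductive hypothesis to $w'$ on each of the finitely many cosets of $W_J$ it meets and then using the lifting property to show the largest of the resulting maxima dominates the rest. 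An alternative is a secondary reduction: for a left descent $s'$ of $x$ the element $s'x$ again lies in $W^{J}$, every element of $xW_J$ has $s'$ as a left descent, and $y\mapsto s'y$ is an order isomorphism $xW_J\to(s'x)W_J$; when $s'$ is also a left descent of $w$ this identifies $[e,w]\cap xW_J$ with $[e,s'w]\cap(s'x)W_J$ and finishes by induction, leaving the case $w\in W^{J}$, $x\ne e$, with no left descent of $x$ a left descent of $w$, where peeling a left descent $t$ of $w$ gives $x\le tw\in W^{J}$ with $\ell(tw)<\ell(w)$ and writes $[e,w]\cap xW_J$ as $[e,tw]\cap xW_J$ together with a $t$-translate of an intersection of $[e,tw]$ with a single coset of $W_J$.

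The step I expect to be the genuine difficulty, on any of these routes, is the final reconciliation: once $[e,w]\cap xW_J$ is expressed as a union of two pieces, each of which has a single maximal element by the inductive hypothesis, one must show those two maximal elements are comparable in Bruhat order, so that the larger one is the maximum of the whole — equivalently, that two incomparable maximal elements cannot occur. This is exactly where the coset decomposition and the Bruhat order fail to interact transparently, and I would expect it to need a careful, case-specific application of the lifting property to the particular elements produced by the construction.
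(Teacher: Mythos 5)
Your proposal is not a complete proof: the case $s\notin J$ (after your reductions to $w\in W^J$ and $x\neq e$) is exactly where the theorem lives, and you leave it as a collection of unfinished strategies. On each of your proposed routes you end up expressing $[e,w]\cap xW_J$ (or an order-isomorphic set) as a union of pieces, each of which has a unique maximal element by the inductive hypothesis, and you then need the several candidate maxima to be pairwise comparable so that the largest dominates the whole set. You acknowledge this yourself in your final paragraph, but no argument is supplied, and this is precisely the non-routine content of the theorem: a priori the maxima of $[e,w']\cap x'W_J$ over the various cosets $x'W_J$ met by your set $\{\min(u,us)\}$ need not be comparable, and the lifting property alone does not force the required inequalities between elements lying in different cosets. (Your reductions themselves --- the $s\in J$ case via lifting, the passage to $w\in W^J$, and the left-descent reduction when $D_L(x)\cap D_L(w)\neq\emptyset$ --- look correct, but they only shrink the problem.)

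The paper's proof sidesteps the reconciliation issue entirely by running the induction on $\ell(x)$ rather than $\ell(w)$ and by producing a single explicit candidate that dominates everything at once. It takes the left-sided parabolic decomposition $w=u\cdot v$ with respect to $S\setminus D_L(x)$, sets $J'=\{t\in J\cup S(x)\mid tx\in xW_J\}$, lets $q'$ be the maximum of $[e,u]\cap W_{J'}$ and $q''$ the maximum of $[e,v]\cap sxW_J$ for a fixed $s\in D_L(v)$, and defines $q:=q'\star(sq'')$ using the Coxeter monoid product. The key external input is the identity $[e,a\star b]=\{a'b'\mid a'\leq a,\ b'\leq b\}$ (He, Kenny): once one shows that every $y\in[e,w]\cap xW_J$ admits a reduced factorization $y=y_u\cdot y_v$ with $y_u\leq q'$ and $y_v\leq sq''$, the inequality $y\leq q$ is immediate, and no comparison between competing maxima is ever required. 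If you want to salvage your $\ell(w)$-induction you would need a substitute for this monoid identity; without one, the comparability step you flag is a genuine gap, not a routine verification.
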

When $x=e$, Theorem \ref{thm:main} was proved van den Hombergh in \cite{vdHombergh74} and then again by Billey, Fan, and Losonczy in \cite{Billey-Fan-Losonczy99}.  If $W$ is a finite Coxeter group, then $W$ contains a unique longest element $w_0$.  Theorem \ref{thm:main} holds by the following argument.  The map given by left multiplication by $w_0$ sends 
$$[e,w]\cap xW_J\mapsto [w_0w,w_0]\cap w_0x W_J.$$
In \cite{Knutson-Lam-Speyer14}, Knuston, Speyer and Lam prove that the intersection of principal upper ideals with a parabolic coset always contain a unique minimal element (In fact, they proves this for all Coxeter groups not just finite).  Hence if $W$ is finite, then Theorem \ref{thm:main} follows from \cite[Proposition 2.1]{Knutson-Lam-Speyer14} since the left action of $w_0$ is an order reversing involution (see \cite[Proposition 2.3.4]{Bjorner-Brenti05}).  Alternatively, Theorem \ref{thm:main} can also be proved in the finite case using the fact that Bruhat intervals are Coxeter matroids \cite{Caselli-Adderio-Marietti21, Tsukerman-Williams15}.  

When working with infinite Coxeter groups, there is no longest element and the arguments above are not valid.  Instead, we give a constructive proof of Theorem \ref{thm:main} using the Coxeter moniod structure on $W$ (this is sometimes called the 0-Iwahari-Hecke moniod or Demazure product).  This construction also relies on techniques involving both left and right parabolic decompositions of Coxeter group elements similar to those used in \cite{MOY19, Richmond-Slofstra16, Richmond-Slofstra17}.  The rest of this section is dedicated to consequences and applications of Theorem \ref{thm:main}.


\subsection{Properties of maximal elements}For any $x\in W^J$ and $x\leq w$, define $$m_J(w,x):=x^{-1}q$$ where $q$ denotes the maximal element of $[e,w]\cap xW_J$.  It is easy to see that $m_J(w,x)\in W_J$.  For any $w\in W$, we say $w=vu$ is the (right) parabolic decomposition of $w$ with respect to $J$ if $\ell(w)=\ell(v)+\ell(v)$ and $v\in W^J$ and $u\in W_J$.  It is a basic fact that any element of $W$ can be uniquely written in this way.  Moreover, if $w=vu$ is the parabolic decomposition with respect to $J$ and $x\in W^J$ with $x\leq w$, then $x\leq v$.  In the following proposition, we state some properties of $m_J(w,x)$.

\begin{proposition}\label{prop:properties}
Let $w=vu$ denote the parabolic decomposition with respect to $J$.  Then the following are true:
\begin{enumerate}
\item As graded posets, $[e,w]\cap xW_J\simeq [e,m_J(w,x)]$ via $y\mapsto x^{-1}y.$
\item $m_J(w,v)=u$.
\item If $x<\bar x\leq v$, then $m_J(w,x)\geq m_J(w,\bar x)$.
\item If $x\leq v$, then $m_J(w,x)\in [m_J(w,v), m_J(w,e)]$
\end{enumerate}
\end{proposition}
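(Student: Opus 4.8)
The plan is to reduce all four statements to two lemmas about the left-translation map $L_x\colon W_J\to xW_J$, $z\mapsto xz$, attached to an element $x\in W^J$. The first lemma asserts that $L_x$ is an isomorphism of posets from $W_J$ onto the subposet $xW_J$ of $(W,\le)$; equivalently, $xa\le xb$ if and only if $a\le b$, for $a,b\in W_J$. The second asserts that if $x\le\bar x$ in $W^J$ and $z\in W_J$, then $xz\le\bar x z$. The second lemma and the ``if'' direction of the first follow at once from the subword property \cite{Bjorner-Brenti05}: a reduced word for $\bar x$ (resp.\ for $b$) contains a subword spelling $x$ (resp.\ $a$), and since $\ell(\bar x z)=\ell(\bar x)+\ell(z)$ and $\ell(xz)=\ell(x)+\ell(z)$ for $x,\bar x\in W^J$ and $z\in W_J$, appending a reduced word for $z$ (resp.\ prepending one for $x$) exhibits the required subexpression. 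The real content is the ``only if'' direction of the first lemma, which I expect to be the main obstacle.

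For that direction I would argue as follows. Fix reduced words $\mathbf x$ for $x$ and $\mathbf b$ for $b$, so that $\mathbf x\mathbf b$ is reduced for $xb$. If $xa\le xb$, choose a subexpression of $\mathbf x\mathbf b$ spelling $xa$, using letters of $\mathbf x$ with product $x'$ and letters of $\mathbf b$ with product $b'\in W_J$; thus $xa=x'b'$. A length comparison shows each half is reduced and $\ell(x'b')=\ell(x')+\ell(b')$. Combining this with the parabolic decomposition of $x'$ and the uniqueness of parabolic decompositions forces $(x'b')^J=(x')^J$. Since $(x'b')^J=(xa)^J=x$ while $(x')^J\le x'\le x$, we get $x'=x$, hence $a=b'$, and $b'\le b$ in $W_J$ because $b'$ is spelled by a subword of $\mathbf b$. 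Therefore $a\le b$.

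Granting the two lemmas, the proposition follows quickly. For (1), let $q=x\,m_J(w,x)$ be the maximal element of $[e,w]\cap xW_J$, which exists by Theorem \ref{thm:main}; for $z\in W_J$ one has $xz\le w \iff xz\le q \iff z\le m_J(w,x)$, the last step by the first lemma, so $L_x$ restricts to a bijection $[e,m_J(w,x)]\to[e,w]\cap xW_J$ that is a poset isomorphism (again by the first lemma, and recalling that $W_J$ carries the restriction of the Bruhat order of $W$), and it is graded since $\ell(xz)-\ell(x)=\ell(z)$. For (2), note $v\in W^J$ and $v\le w$, so $m_J(w,v)$ is defined; as $w=vu\in vW_J$ and $w$ is the maximum of $[e,w]$, it is the maximum of $[e,w]\cap vW_J$, whence $m_J(w,v)=v^{-1}w=u$.

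For (3), put $z=m_J(w,\bar x)$, so $\bar x z\le w$; by the second lemma $xz\le\bar x z\le w$, so $xz\in[e,w]\cap xW_J$ and hence $xz\le x\,m_J(w,x)$, and the ``only if'' direction of the first lemma gives $m_J(w,\bar x)=z\le m_J(w,x)$. Finally (4) is immediate from (2) and (3): applying (3) to $e<x$ (if $x\ne e$) gives $m_J(w,x)\le m_J(w,e)$, and applying (3) to $x<v$ (if $x\ne v$) together with (2) gives $m_J(w,x)\ge m_J(w,v)=u$, while the cases $x=e$ and $x=v$ are trivial; so $m_J(w,x)\in[m_J(w,v),m_J(w,e)]$. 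As indicated, the only subtle point in the whole argument is the order-reflecting property asserted by the first lemma.
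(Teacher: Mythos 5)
Your proof is correct and follows essentially the same route as the paper: parts (1), (2), and (4) are argued identically, and your part (3) rests on the same translation map (your $xz\le\bar x z$ is the paper's $ty\le y$ with $t=x\bar x^{-1}$), differing only in that you justify it via the subword property where the paper reduces to covers and uses a reflection. The one addition is that you supply a proof of the order-reflecting property of $z\mapsto xz$ (that $xa\le xb$ implies $a\le b$ for $a,b\in W_J$), which the paper invokes without proof as a known fact; your argument for it is sound.
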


\begin{proof}
Part (1) follows from the fact that the map $xW_J\rightarrow W_J$ given by $y\mapsto x^{-1}y$ is a graded poset isomorphism and that $[e,w]\cap xW_J$ is a lower order ideal in $xW^J$.  Part (2) follows from the fact that $w=vu$ is a parabolic decomposition.  For part (3), suppose that $\bar x$ covers $x$ and hence there exists a reflection $t$ (not necessarily in $S$) such that $x=t\bar x$.  Let $y\in [e,w]\cap \bar xW_J$ and let $q$ denote the maximal element of $[e,w]\cap x W_J$.  It suffices to show that $\bar x^{-1}y\leq x^{-1}q$.  Since $y\in \bar xW_J$ and $ty\leq y\leq w$, we have that $ty\in [e,w]\cap xW_J$ and thus $ty\leq q$.  Since $q$ is also in $xW_J$, we have $x^{-1}ty\leq x^{-1}q$.  But $(x^{-1}t)y=\bar x^{-1}y$ which proves part (3).  Finally, part (4) follows from part (3).
\end{proof}


For any $w\in W$, define the Poincar\'{e} polynomial (i.e. the rank generating function of the poset interval $[e,w]$ with respect to length):
$$P_w(t):=\sum_{w'\leq w} t^{\ell(w')}.$$
As an immediate corollary of Theorem \ref{thm:main} and Proposition \ref{prop:properties} part (1), we get the following formula:
\begin{cor}\label{cor:Poincare}
Let $w\in W$ and $J\subseteq S$.  Then
$$P_w(t)=\sum_{x\in [e,w]\cap W^J} t^{\ell(x)}\cdot P_{m_J(w,x)}(t).$$
\end{cor}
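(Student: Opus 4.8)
The plan is to unwind the definition of $P_w(t)$ along the coset decomposition already recorded in Section~\ref{section:main_results}, and then apply Proposition~\ref{prop:properties}(1) to each piece. From the disjoint union
$$[e,w]=\bigsqcup_{x\in W^J}\bigl([e,w]\cap xW_J\bigr)$$
we get
$$P_w(t)=\sum_{w'\leq w}t^{\ell(w')}=\sum_{x\in W^J}\ \sum_{w'\in[e,w]\cap xW_J}t^{\ell(w')}.$$
The inner sum is empty unless $[e,w]\cap xW_J$ is nonempty, which is equivalent to $x\leq w$; so the outer sum may be restricted to $x\in[e,w]\cap W^J$ without changing its value. This reduces the corollary to evaluating, for a fixed $x\in[e,w]\cap W^J$, the contribution $\sum_{w'\in[e,w]\cap xW_J}t^{\ell(w')}$.

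For this I would invoke Proposition~\ref{prop:properties}(1): the map $y\mapsto x^{-1}y$ is an isomorphism of graded posets from $[e,w]\cap xW_J$ onto $[e,m_J(w,x)]$. Since this map is graded and carries the minimal element $x$ of $[e,w]\cap xW_J$ to the minimal element $e$ of $[e,m_J(w,x)]$, the two length functions differ by the constant $\ell(x)$; equivalently, since $x\in W^J$ and $y\in xW_J$, one has the standard additivity $\ell(y)=\ell(x)+\ell(x^{-1}y)$. Writing $z=x^{-1}y$, this gives
$$\sum_{w'\in[e,w]\cap xW_J}t^{\ell(w')}=\sum_{z\leq m_J(w,x)}t^{\ell(x)+\ell(z)}=t^{\ell(x)}\,P_{m_J(w,x)}(t).$$
Substituting back into the sum over $x\in[e,w]\cap W^J$ yields the claimed identity.

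The argument has no genuine obstacle once Theorem~\ref{thm:main} and Proposition~\ref{prop:properties}(1) are in hand: Theorem~\ref{thm:main} is exactly what makes $m_J(w,x)$ a well-defined element, since it names the unique maximal element of $[e,w]\cap xW_J$, and part~(1) of the proposition packages the poset isomorphism that lets us recognize the inner sum as a shifted Poincar\'{e} polynomial. The only point needing a word of care is that the grading shift is honestly $\ell(x)$ and not some other constant; this holds precisely because $x$ is the \emph{minimal} length coset representative, so that $\ell(xz)=\ell(x)+\ell(z)$ for all $z\in W_J$. After that, the corollary is pure bookkeeping, which is why it is flagged as immediate.
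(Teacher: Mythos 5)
Your proof is correct and follows exactly the route the paper intends: the paper gives no explicit argument, calling the corollary an immediate consequence of Theorem~\ref{thm:main} and Proposition~\ref{prop:properties}(1), and your write-up simply unwinds that, including the correct justification $\ell(y)=\ell(x)+\ell(x^{-1}y)$ for the grading shift via minimality of the coset representative.
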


We now explain how the (shifted) maximal elements $m_J(w,x)$ are related to Billey-Postnikov decompositions.  First note that Proposition \ref{prop:properties} part (4) says that $m_J(w,x)$ takes on a values in the interval $[m_J(w,v), m_J(w,e)]$.  Fix $J\subseteq S$ and define $\umax:=m_J(w,e)$.  We say that a parabolic decomposition $w=vu$ is a Billey-Postnikov decomposition (or BP decomposition) if $u=\umax$ (i.e. $m_J(w,v)=m_J(w,e)$).  Hence, if $w=vu$ is a BP-decomposition, then Proposition \ref{prop:properties} part (4) implies that $m_J(w,x)=u$ for all $x\in [e,w]\cap W^J$.  By Corollary \ref{cor:Poincare}, the Poincar\'{e} polynomial factors as
$$P_w(t)=P^J_v(t)\cdot P_{u}(t)$$
where $$P^J_v(t):=\sum_{x\in [e,v]\cap W^J} t^{\ell(x)}.$$
This recovers \cite[Theorem 6.1]{BP05} which states that BP decompositions correspond to parabolic factorizations of the Poincar\'{e} polynomial.  When $w=vu$ is not a BP decomposition, then $m_J(w,x)$ can take on different values for $x\in W^J$.  We remark that not every $u'\in[u,\umax]$ is equal to $m_J(w,x)$ for some $x\in W^J.$   It is an interesting problem to characterize the set of shifted maximal elements $$M_J(w):=\{m_J(w,x)\ |\ x\in W^J\}.$$
Proposition \ref{prop:properties} part (3) implies that the induced Bruhat order on $M_J(w)$ is a subposet of $[e,v]\cap W^J$ equipped with the reverse Bruhat ordering.

\subsection{Example in the permutation group} \label{section:example} Consider the permutation group $S_4$ which is the Coxeter group of type $A_3$.  This group is generated by $S=\{s_1,s_2,s_3\}$ subject to the relations 
$$s_i^2=e,\quad (s_1s_2)^3=(s_2s_3)^3=(s_1s_3)^2=e.$$
If $J=\{s_1,s_2\}$, then the minimal length coset representatives are $$W^J=\{e, s_3, s_2s_3, s_1s_2s_3\}.$$  The Bruhat decomposition of $S_4$ with respect to the corresponding cosets is given in Figure \ref{fig:4231lower}.  Here we respectively color the cosets $W_J, s_3W_J, s_2s_3W_J$ and $s_1s_2s_3W_J$ with black, red, blue and green.

\begin{figure}[h]

 \begin{center}
 
  \vspace{3.7mm}
    \centering
\begin{tikzpicture}[scale=0.7]
 \node[black!40!green] (4321) at (0,10) {$s_1s_2s_3s_1s_2s_1$};

 \node[blue] (4312) at (5,8) {$s_2s_3s_2s_1s_2$};
 \node[black!40!green] (4231) at (-5,8) {$s_1s_2s_3s_2s_1$};
 \node[black!40!green] (3421) at (0,8) {$s_1s_2s_3s_1s_2$};

 \node[blue] (4132) at (0,6) {$s_2s_3s_2s_1$};
 \node[black!20!red] (4213) at (8,6) {$s_3s_1s_2s_1$};
 \node[blue] (3412) at (4,6) {$s_2s_3s_1s_2$};
 \node[black!40!green] (2431) at (-8,6) {$s_1s_2s_3s_2$};
 \node[black!40!green] (3241) at (-4,6) {$s_1s_2s_3s_1$}; 

 \node[blue] (1432) at (-6,4) {$s_2s_3s_2$};
 \node[black!20!red] (4123) at (2,4) {$s_3s_2s_1$};
 \node[black!20!red] (2413) at (6,4) {$s_3s_1s_2$};
 \node[blue] (3142) at (-2,4) {$s_2s_3s_1$};
 \node (3214) at (10,4) {$s_1s_2s_1$};
 \node[black!40!green] (2341) at (-10,4) {$s_1s_2s_3$};

 \node[black!20!red] (1423) at (-4,2) {$s_3s_2$};
 \node[blue] (1342) at (-8,2) {$s_2s_3$};
  \node[black!20!red] (2143) at (0,2) {$s_3s_1$};
 \node (3124) at (4,2) {$s_2s_1$};
 \node (2314) at (8,2) {$s_1s_2$};

 \node[black!20!red] (1243) at (-5,0) {$s_3$};
 \node (1324) at (0,0) {$s_2$};
 \node (2134) at (5,0) {$s_1$};

  \node (1234) at (0,-2) {$\emptyset$};
  
\draw (1234) -- (1243);
\draw (1234) -- (1324);
\draw (1234) -- (2134);
\draw (1243) -- (1342);
\draw (1243) -- (1423);
\draw (1243) -- (2143);
\draw (1324) -- (1342);
\draw (1324) -- (1423);
\draw (1324) -- (2314);
\draw (1324) -- (3124);
\draw (2134) -- (2143);
\draw (2134) -- (2314);
\draw (2134) -- (3124);
\draw (1342) -- (1432);
\draw (1342) -- (2341);
\draw (1342) -- (3142);
\draw (1423) -- (1432);
\draw (1423) -- (2413);
\draw (1423) -- (4123);
\draw (2143) -- (2341);
\draw (2143) -- (2413);
\draw (2143) -- (3142);
\draw (2143) -- (4123);
\draw (2314) -- (2341);
\draw (2314) -- (2413);
\draw (2314) -- (3214);
\draw (3124) -- (3142);
\draw (3124) -- (3214);
\draw (3124) -- (4123);

\draw (1432) -- (2431);
\draw (1432) -- (3412);
\draw (1432) -- (4132);

\draw (2341) -- (2431);
\draw (2341) -- (3241);

\draw (2413) -- (2431);
\draw (2413) -- (4213);

\draw (3142) -- (3241);
\draw (3142) -- (4132);
\draw (3142) -- (3412);

\draw (3214) -- (3241);
\draw (3214) -- (3412);
\draw (3214) -- (4213);

\draw (4123) -- (4132);
\draw (4123) -- (4213);

\draw (2431) -- (3421);
\draw (2431) -- (4231);

\draw (3241) -- (3421);
\draw (3241) -- (4231);

\draw (3412) -- (3421);
\draw (3412) -- (4312);

\draw (4132) -- (4231);
\draw (4132) -- (4312);

\draw (4213) -- (4231);
\draw (4213) -- (4312);

\draw (3421) -- (4321);
\draw (4231) -- (4321);
\draw (4312) -- (4321);

\end{tikzpicture}
    \captionsetup{width=1.0\linewidth}
  \captionof{figure}{The Bruhat order of $S_4$ and the cosets with respect to $J = \{s_1,s_2\}$ presented in different colors.}
  \label{fig:4231lower}

 \end{center}

\end{figure}
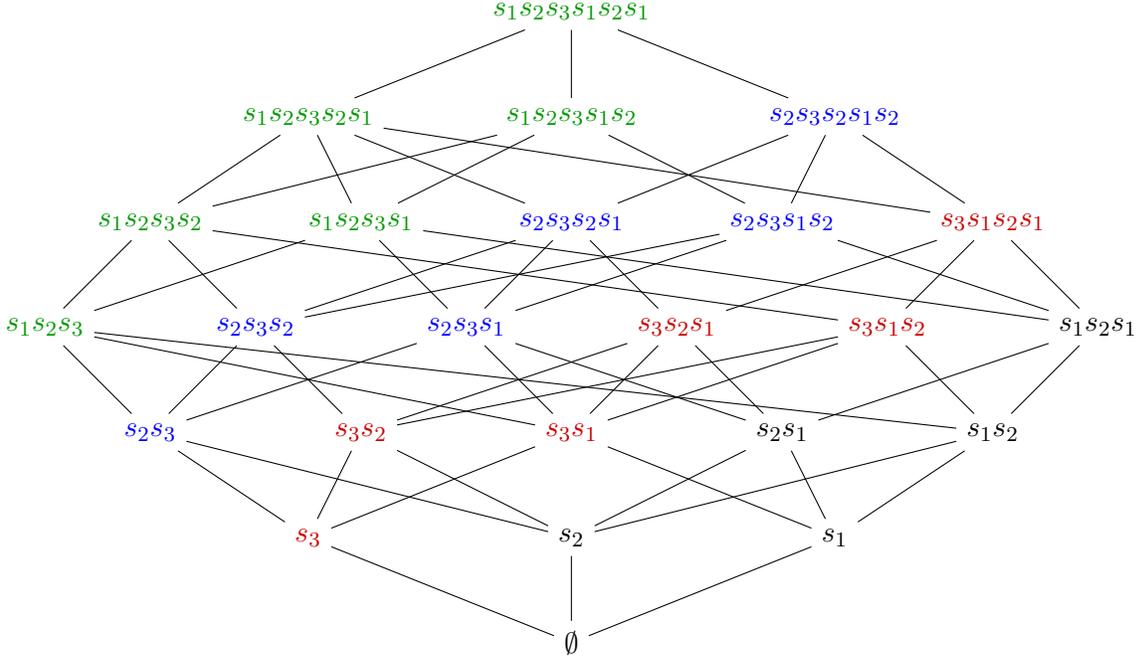
Let $w = s_1s_2s_3s_2s_1$ and consider the Bruhat lower interval $[e,w]$.  In one-line notation, this permutation is $w=4231$.  Note that $w=vu=(s_1s_2s_3)(s_2s_1)$ is the parabolic decomposition with respect to $J$.  We see in Figure \ref{fig:4231decomp} that each coset of $W_J$ intersected with $[e,w]$ has a unique maximal element (here the cosets retain the same color as in Figure \ref{fig:4231lower}). For example, $[e,w] \cap s_2s_3W_J$ (the blue elements) has maximal element $s_2s_3s_2s_1$ and $m_J(s_2s_3,w)=s_2s_1$.   We record the value of $m_J(w,x)$ of each $x\in W^J$ in the following table:
$$\begin{tabular}{|c|c|c|c|c|}
\hline
    $x$ & $e$ & $s_3$ & $s_2s_3$ & $s_1s_2s_3$ \\ \hline
    $m_J(w,x)$ & $s_1s_2s_1$ & $s_1s_2s_1$ & $s_2s_1$ & $s_2s_1$\\ \hline
\end{tabular}$$

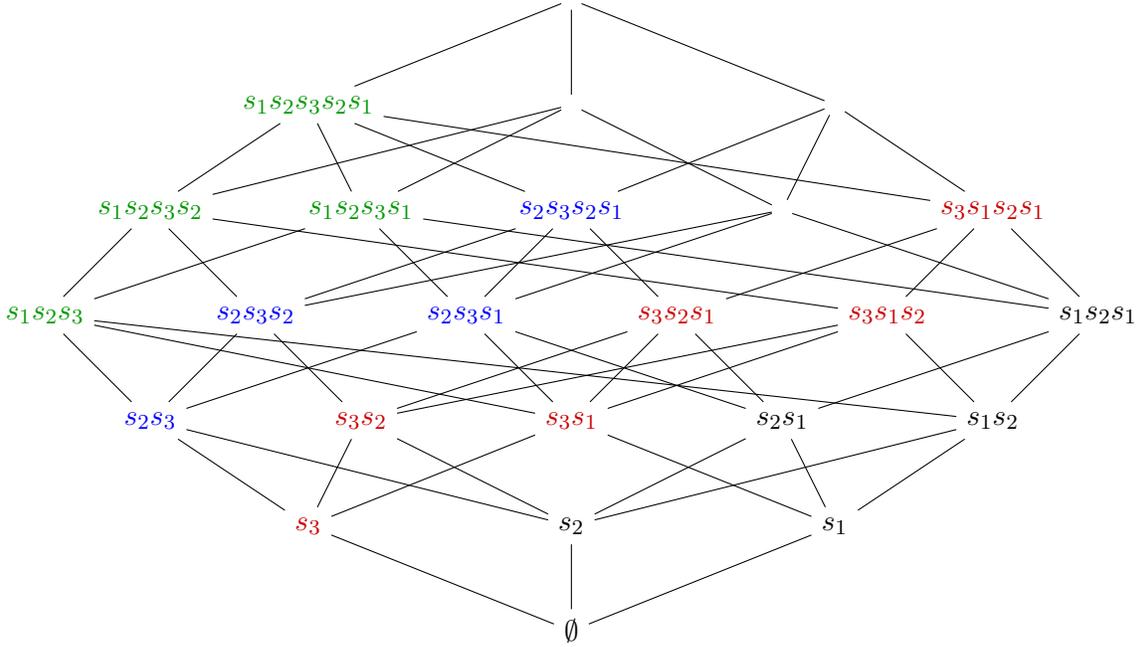
\begin{figure}[h]
 \begin{center}
 
  \vspace{3.7mm}
    \centering
\begin{tikzpicture}[scale=0.7]
 \node[black!40!green] (4321) at (0,10) {};

 \node[blue] (4312) at (5,8) {};
 \node[black!40!green] (4231) at (-5,8) {$s_1s_2s_3s_2s_1$};
 \node[black!40!green] (3421) at (0,8) {};

 \node[blue] (4132) at (0,6) {$s_2s_3s_2s_1$};
 \node[black!20!red] (4213) at (8,6) {$s_3s_1s_2s_1$};
 \node[blue] (3412) at (4,6) {};
 \node[black!40!green] (2431) at (-8,6) {$s_1s_2s_3s_2$};
 \node[black!40!green] (3241) at (-4,6) {$s_1s_2s_3s_1$}; 

 \node[blue] (1432) at (-6,4) {$s_2s_3s_2$};
 \node[black!20!red] (4123) at (2,4) {$s_3s_2s_1$};
 \node[black!20!red] (2413) at (6,4) {$s_3s_1s_2$};
 \node[blue] (3142) at (-2,4) {$s_2s_3s_1$};
 \node (3214) at (10,4) {$s_1s_2s_1$};
 \node[black!40!green] (2341) at (-10,4) {$s_1s_2s_3$};

 \node[black!20!red] (1423) at (-4,2) {$s_3s_2$};
 \node[blue] (1342) at (-8,2) {$s_2s_3$};
  \node[black!20!red] (2143) at (0,2) {$s_3s_1$};
 \node (3124) at (4,2) {$s_2s_1$};
 \node (2314) at (8,2) {$s_1s_2$};

 \node[black!20!red] (1243) at (-5,0) {$s_3$};
 \node (1324) at (0,0) {$s_2$};
 \node (2134) at (5,0) {$s_1$};

  \node (1234) at (0,-2) {$\emptyset$};
  
\draw (1234) -- (1243);
\draw (1234) -- (1324);
\draw (1234) -- (2134);
\draw (1243) -- (1342);
\draw (1243) -- (1423);
\draw (1243) -- (2143);
\draw (1324) -- (1342);
\draw (1324) -- (1423);
\draw (1324) -- (2314);
\draw (1324) -- (3124);
\draw (2134) -- (2143);
\draw (2134) -- (2314);
\draw (2134) -- (3124);
\draw (1342) -- (1432);
\draw (1342) -- (2341);
\draw (1342) -- (3142);
\draw (1423) -- (1432);
\draw (1423) -- (2413);
\draw (1423) -- (4123);
\draw (2143) -- (2341);
\draw (2143) -- (2413);
\draw (2143) -- (3142);
\draw (2143) -- (4123);
\draw (2314) -- (2341);
\draw (2314) -- (2413);
\draw (2314) -- (3214);
\draw (3124) -- (3142);
\draw (3124) -- (3214);
\draw (3124) -- (4123);

\draw (1432) -- (2431);
\draw (1432) -- (3412);
\draw (1432) -- (4132);

\draw (2341) -- (2431);
\draw (2341) -- (3241);

\draw (2413) -- (2431);
\draw (2413) -- (4213);

\draw (3142) -- (3241);
\draw (3142) -- (4132);
\draw (3142) -- (3412);

\draw (3214) -- (3241);
\draw (3214) -- (3412);
\draw (3214) -- (4213);

\draw (4123) -- (4132);
\draw (4123) -- (4213);

\draw (2431) -- (3421);
\draw (2431) -- (4231);

\draw (3241) -- (3421);
\draw (3241) -- (4231);

\draw (3412) -- (3421);
\draw (3412) -- (4312);

\draw (4132) -- (4231);
\draw (4132) -- (4312);

\draw (4213) -- (4231);
\draw (4213) -- (4312);

\draw (3421) -- (4321);
\draw (4231) -- (4321);
\draw (4312) -- (4321);

\end{tikzpicture}
    \captionsetup{width=1.0\linewidth}
  \captionof{figure}{The lower interval of $w = s_1s_2s_3s_2s_1$, again with different colors representing the intersection with the cosets with respect to $J = \{s_1,s_2\}$.}
  \label{fig:4231decomp}

 \end{center}
\end{figure}
By Corollary \ref{cor:Poincare}, the Poincar\'{e} polynomial of $w=s_1s_2s_3s_2s_1$ decomposes as:
\begin{align*}
P_w(t)&=(1+t)\cdot P_{s_1s_2s_1}(t) + (t^2+t^3)\cdot P_{s_1s_2}(t)\\
&=(1+t) (1+2t+2t^2+t^3) + (t^2+t^3) (1+2t+t^2).
\end{align*}
In this example, $u=s_2s_1$ whereas $\umax=s_1s_2s_1$. Thus the parabolic decomposition $w=vu$ with respect to $J=\{s_1, s_2\}$ is not a BP-decomposition.

\subsection{Relative Bruhat intervals}  In this section, we show that Theorem \ref{thm:main} extends to lower Bruhat intervals in $W^J$.  For any $J\subseteq S$ and $w\in W^J$ we define the relative lower Bruhat interval of $w$ with respect to $J$ as
$$[e,w]^J:=[e,w]\cap W^J.$$
Relative Bruhat intervals correspond to Schubert varieties in partial flag varieties in the same way the usual Bruhat intervals correspond to Schubert varieties in the full flag variety (see Section \ref{section:Schubert}).  Let $J\subseteq K\subseteq S$ and define
$$W^J_K:=W^J\cap W_K.$$
Observe that $W^K\subseteq W^J$ and that we can decompose
$$W^J=W^K\cdot W^J_K.$$
Analogous to non-relative case, for any $w\in W^J$, consider the decomposition
$$[e,w]^J=\bigsqcup_{x\in W^K} [e,w]^J\cap xW^J_K.$$
Note that $[e,w]^J\cap xW^J_K\neq\emptyset$ if and only if $x\leq w$.  If $J=\emptyset$, then $W^J=W$ and $[e,w]^J=[e,w]$.
\begin{cor}\label{cor:main_thm_relative}
Let $J\subseteq K\subseteq S$ with $w\in W^J$ and $x\in W^K$.  If $x\leq w$, then $[e,w]^J\cap xW^J_K$ has a unique maximal element with respect to Bruhat order.

In particular, if $q^K$ denotes the maximal element of $[e,w]\cap xW^K$ and $q\in W^J$ indexes the coset for which $q^K\in qW_J$, then $q$ is unique maximal element of $[e,w]^J\cap xW^J_K$.
\end{cor}

\begin{proof}
If $J=\emptyset$, then the corollary follows from Theorem \ref{thm:main}.  Now suppose $J$ is some subset of $K$ and consider $m_K(w,x)\in W_K$ and let $m_K(w,x)=vu$ denote the parabolic decomposition with respect to $J$.  In particular, $v\in W^J_K$ and $u\in W_J$.  Set $q:=xv$ and note that $q\in [e,w]^J\cap xW^J_K$.  Now suppose that $y\in [e,w]^J\cap xW^J_K$.  Then $y\in [e,w]\cap xW_K$ and $y\leq x\cdot  m_K(w,x)=(xv)u$.  Since $y\in xW^J_K$, we must have $y\leq xv=q$.
\end{proof}
We remark that Corollary \ref{cor:main_thm_relative} was proved for $x=e$ in \cite[Lemma 4.1]{Richmond-Slofstra16} following a similar argument.  For any $w\in W^J$ and $x\in W^K$ for which $x\leq w$, define $$m_K^J(x,w):=x^{-1}q$$ where $q$ denotes the maximal element of $[e,w]^J\cap xW^J_K$.  The following proposition contains results analogous to Proposition \ref{prop:properties} and Corollary \ref{cor:Poincare}.  The proof follows the same argument, so we leave it as an exercise for the reader.  

\begin{proposition}\label{prop:properties2}
Let $J\subseteq K\subseteq S$ with $w\in W^J$.  Let $w=vu$ denote the parabolic decomposition with respect to $K$.  Then the following are true:
\begin{enumerate}
\item As graded posets, $[e,w]^J\cap xW^J_K\simeq [e,m^J_K(w,x)]^J$ via $y\mapsto x^{-1}y.$
\item $m^J_K(w,v)=u$.
\item If $x<\bar x\leq v$, then $m^J_K(w,x)\geq m^J_K(w,\bar x)$.
\item If $x\leq v$, then $m^J_K(w,x)\in [m^J_K(w,v), m^J_K(w,e)]$
\item The Poincar\'{e} polynomial 
$$P^J_w(t)=\sum_{x\in [e,w]^K} t^{\ell(x)}\cdot P^J_{m^J_K(w,x)}(t).$$
In particular, if $m^J_K(w,v)=m^J_K(w,e)$, then 
$$P^J_w(t)=P^K_v(t)\cdot P^J_u(t).$$
\end{enumerate}
\end{proposition}
The second portion of Proposition part (5) corresponds to an analogue of BP-decompositions for relative Bruhat intervals.  For more information on this,  we refer the reader to \cite{Richmond-Slofstra16}.

\subsection{Applications to Schubert varieties}\label{section:Schubert}
Let $G$ denote a Kac-Moody (or semi-simple Lie group) over $\C$ and let $T\subseteq B\subseteq G$ denote a choice of maximal torus and Borel subgroup.  Let $W:=N(T)/T$ denote the Weyl group of $G$ with simple generating set $S$.  The group $W$ is a crystallographic Coxeter group.   For any subset $J\subseteq S$, define the parabolic subgroup $P_J:=BW_JB$ and consider the partial flag variety $G/P_J$.  If $J=\emptyset$, then $G/P_J=G/B$ is the full flag variety of $G$.  For any $w\in W^J$, define the Schubert variety to be the closure of the $B$-orbit indexed by $w$ in $G/P_J$:
$$X^J(w):=\overline{BwP_J}/P_J=\bigsqcup_{w'\in[e,w]^J} Bw'P_J/P_J.$$
The Schubert variety $X^J(w)$ is a projective variety of complex dimension $\ell(w)$.  Schubert varieties from a stratification of the flag variety $G/P_J$ and play an important role in the representation theory of $G$.  For more on Kac-Moody groups and their flag varieties and Schubert varieties see \cite{Kumar02}.  Let $J\subseteq K\subseteq S$ and consider the $G$-equivariant projection map
$$\pi:G/P_J\twoheadrightarrow G/P_K$$
given by $\pi(gP_J):=gP_K$.  If $w\in W^J$ and $w=vu$ denotes the parabolic decomposition with respect to $K$, then restricting the map $\pi$ to $X^J(w)$ yields a projection
$$\pi:X^J(w)\twoheadrightarrow X^K(v).$$
If $gP_K\in X^K(v)$, then we can choose $b_0\in B$ and $x\in[e,v]^K = [e,w]\cap W^K$ such that $gP_K=b_0xP_K$.  The following proposition is proved in \cite[Lemma 4.6]{Richmond-Slofstra16}.

\begin{prop}\cite{Richmond-Slofstra16}\label{prop:fiber}
    Let $w=vu\in W^J$ denote the parabolic decomposition with respect to $K$ and consider the projection $\pi:X^J(w)\twoheadrightarrow X^K(v).$  Let $b_0\in B$ and $x\in[e,v]^K$.  Then $b_0xP_K\in X^K(v)$ and the fiber
    $$\pi^{-1}(b_0xP_K)=\bigsqcup_{y\in [e,w]^J\cap xW^J_K} b_0xBx^{-1}yP_J/P_J.$$
\end{prop}
Corollary \ref{cor:main_thm_relative} and Proposition \ref{prop:properties2} part (1) imply the following:
\begin{cor}\label{cor:Schubert_fibers}
    With the notation in Proposition \ref{prop:fiber}, we have that the fiber
    $$\pi^{-1}(b_0xP_K)=b_0x\cdot X^J(m^J_K(w,x)).$$
    In particular, each fiber of the map $\pi$ restricted to $X^J(w)$ is isomorphic to a Schubert variety.
\end{cor}
If $w=vu$ is a BP decomposition with respect to $K$, then $m^J_K(w,x)=u$ for all $x\leq v$.  In this case, Corollary \ref{cor:Schubert_fibers} implies each fiber of the projection $\pi:X^J(w)\twoheadrightarrow X^K(v)$ is isomorphic to $X^J(u)$.  It is shown in \cite[Theorem 3.3]{Richmond-Slofstra16} that this projection gives a $X^J(u)$-fiber bundle structure on the Schubert variety $X^J(w)$.

\section{Proof of Theorem \ref{thm:main}}\label{section:proof}
We begin with some important definitions.  For any $w\in W$, define the sets
\begin{align*}
S(w)&:=\{s\in S \ |\ s\leq w\}\\
D_R(w)&:=\{s\in S\ |\ \ell(ws)\leq \ell(w)\}\\
D_L(w)&:=\{s\in S \ |\ \ell(sw)\leq \ell(w)\}.
\end{align*}
The set $S(w)$ is called the support of $w$ and can be thought of as the set of generators appearing in some (or any) reduced word of $w$.  The sets $D_R(w)$ and $D_L(w)$ are respectively called the right and left descents of $w$.  
For any $w\in W$ and $J\subseteq S$, we say that $w=uv$ is a left-sided parabolic decomposition with respect to $J$ if $\ell(w)=\ell(u)+\ell(v)$, where $u\in W_J$ and $v$ is a minimal length coset representative of the left-coset space $W_J\backslash W$.  
The usual parabolic decomposition $w=vu$ is sometimes called a right-sided parabolic decomposition with respect to $J$.  We say $w=w_1\cdot w_2$ is a reduced factorization if $\ell(w)=\ell(w_1)+\ell(w_2)$.  Note the all left and right parabolic decompositions are reduced factorizations.  

The proof of Theorem \ref{thm:main} relies on the 0-Iwahari-Hecke monoid or Coxeter monoid structure of $W$.  This structure can be viewed as the monoid generated by $S$ with a product $\star$ satisfying the Coxeter braid relations and along with the relation $s\star s=s$ for all $s\in S$.  This monoid structure was first studied by Norton in \cite{No79} in the context of Hecke algebras.  It is well known that, as sets, $W=\langle S,\star\rangle$.  The next lemma records some basic facts about the Coxeter monoid.
\begin{lemma}\label{lemma:moniod_properties}
Let $W$ be a Coxeter group.  Then the following are true:
\begin{enumerate}
    \item Let $(s_1,\ldots,s_k)$ be a sequence of generators in $S$.  Then
    $$s_1\cdots s_k\leq s_1\star\cdots \star s_k$$ with equality if and only if $(s_1,\ldots,s_k)$ is a reduced expression.
    \item For any $s\in S$ and $w\in W$,
$$s\star w=\begin{cases} w & \text{if}\quad \ell(sw)< \ell(w)\\ sw & \text{if}\quad \ell(sw)>\ell(w).\end{cases}.$$
\end{enumerate}
\end{lemma}

The next proposition is a key property about the monoid structure of $W$ in relation to Bruhat intervals.  It is proved by He in \cite[Lemma 1]{He09} and later by Kenny in \cite[Proposition 8]{Kenny14}.

\begin{prop}\cite{He09, Kenny14}\label{prop:interval_product}
For any $w,u\in W$, we have that $$[e,w\star u]=\{ab \ |\ a\in [e,w], b\in[e,u]\}.$$
\end{prop}

We give an example of this phenomenon. The poset in Figure~\ref{fig:4231lower} is the Bruhat order of $S_4$. The elements in the lower interval of $s_1s_2s_3s_2s_1$ is colored in red. We set $J = \{s_1,s_2\}$ and $s = s_3$.  All the elements in the lower interval $[e,s_1s_2s_3s_2s_1]$ can be written as $a \star b$ where $a \leq s_1s_2s_3$ and $b \leq s_2s_1$. For example, $s_2s_3s_1$ can be written as $s_2s_3 \star s_1$.

The following proposition establishes Theorem \ref{thm:main} in the case that $\ell(x)=0$ and is proved in \cite[Lemma 7]{vdHombergh74} and later in \cite[Theorem 2.2]{Billey-Fan-Losonczy99}.
\begin{prop}\cite{Billey-Fan-Losonczy99, vdHombergh74}\label{prop:basecase}
For any $w\in W$ and $J\subseteq S$, the set $[e,w]\cap W_J$ has a unique maximal element.
\end{prop}

Observe that the maximal element of $[e,w]\cap W_J$ can be constructed using the Coxeter moniod product.  Let $w=s_1\cdots s_k$ be a reduced word and let $(s_{i_1},\ldots,s_{i_j})$ denote the subsequence of generators that belong to $J$.  It follows from Lemma \ref{lemma:moniod_properties} that 
$$s_{i_1}\star\cdots\star s_{i_{j}}$$
is the unique maximal element of $[e,w]\cap W_J$.  We remark that this construction of the maximal element was described in \cite[Section 2]{Billey-Fan-Losonczy99}.





Before we prove the Theorem \ref{thm:main}, we briefly outline how the inductive argument works in the case that $\ell(x)=1$.  Let $w\in W$ with $x\leq w$. Suppose $x=s\in S\setminus J$ (otherwise $x\notin W^J$).  Let $w=u\cdot v$ be the left-sided parabolic decomposition with respect to $S\setminus\{s\}$.  In particular, $u\in W_{S\setminus\{s\}}$ and $D_L(v)=\{s\}$.  Define
$$J':=\{t\in J\ | \text{$t$ commutes with $s$}\}$$ and consider the following elements which are well defined by Proposition \ref{prop:basecase}:
\begin{enumerate}
    \item Let $q'$ denote the unique maximal element of $[e,u]\cap W_{J'}$
    \item Let $q''$ denote the unique maximal element of $[e,v]\cap W_J$
\end{enumerate}
We show that $$q:=q'\star (sq'')$$ is the unique maximal element $[e,w]\cap sW_J$.  Now let $y\in [e,w]\cap sW_J$ and write a reduced factorization
$$y=y_u\cdot y_v$$ where $y_u\leq u$ and $y_v\leq v.$
To prove that $q$ is maximal, we claim the following:
\begin{enumerate}
    \item $q\in [e,w]\cap sW_J$
    \item $y_v\in sW_J$ and hence $sy_v\in W_J$
    \item $y_u\in W_{J'}$.
\end{enumerate}
These claims are proved below in Lemmas \ref{lemma:q_belongs} and \ref{lemma:y_parts}.  Under these assumptions, we have $sy_v\in[e,v]\cap  W_J$ and hence $sy_v\leq q''$.  Since $q''<sq''$, we must have $y_v\leq sq''$.  By definition, $y_u\leq u$ and hence $y_u\leq q'$.  Proposition \ref{prop:interval_product} implies $y=y_uy_v\leq q'\star (sq'')=q.$ 





\begin{proof}[Proof of Theorem \ref{thm:main}] We now give the proof of Theorem \ref{thm:main} for general $x\in W^J$. Let $w\in W$ and $x\in W^J$ with $x\leq w$.  For the sake of induction, assume $[e,w']\cap x'W_J$ has a unique maximal element for all $w'\in W$ and all $x'\in W^J$ such that $x'<x$.  We also assume that $[e,w']\cap W_{J'}$ has a unique maximal element for all $w'\in W$ and $J'\subseteq S$.
The following is a short technical result needed for the proof.  We remark that this lemma also appears in \cite[Lemma 4.1]{St07}.
\begin{lemma}\label{lemma:generator_action_on_cosets}
If $x\in W^J$ and $s\in S$ such that $x<sx$, then either $sx\in W^J$ or $sx\in xW_J$.
\end{lemma}

\begin{proof}
Since $x<sx$, we must have that $sx\in vW_J$ for some $v\in W^J$ where $x\leq v\leq sx$.  But $sx$ covers $x$, so either $v=x$ or $v=sx$.
\end{proof}
Let $w=u\cdot v$ denote the left-sided parabolic decomposition with respect to $S\setminus D_L(x)$.
Observe that $S(u)\cap D_L(x)=\emptyset$ and $D_L(v)\subseteq D_L(x)$.  Define
$$J':=\{t\in J\cup S(x)\ |\ tx\in xW_J\}.$$
Fix $s\in D_L(v)$ and consider the following elements which are well defined by induction:
\begin{enumerate}
    \item Let $q'$ denote the unique maximal element of $[e,u]\cap W_{J'}$
    \item Let $q''$ denote the unique maximal element of $[e,v]\cap sxW_{J}$
\end{enumerate}
We claim that  $$q:=q'\star (s q'')$$ is the unique maximal element $[e,w]\cap xW_J$.  Observe that the definition of $q''$ depends on the choice of $s\in D_L(v)$ and that each $s$ will yield a different element $q''$.   However, the maximality of $q=q'\star (s q'')$ will ultimately imply that $q$ itself does not depend on this choice.

\begin{lemma}\label{lemma:q_belongs}
The element $q\in [e,w]\cap xW_J$.
\end{lemma}
\begin{proof}
We show that $q\leq w$ and $q\in xW_J$.  First note that $q'\leq u$ and $q''\leq v$.  Since $s\in D_L(v)$ and $s\notin D_L(q'')$, we also have that $sq''\leq v$.  By Lemma \ref{lemma:moniod_properties}, we have
$$q=q'\star(sq'')\leq u\star v=uv=w.$$
To show that $q\in xW_J$, note that since $q''\in sxW_J$, we have that $sq''\in xW_J$.  Write $sq''=xu_0$ for some $u_0\in W_J$.  Let $q'=t_1\cdots t_k$ be a reduced word for $q'$.  Lemma \ref{lemma:moniod_properties} implies
$$q'\star(sq'')=t_1\star t_2\star\cdots\star t_k\star (xu_0).$$
Since $t_k\in J'$, we have $t_k\notin D_L(x)$ and hence $t_k\star x=t_kx\in xW_J$.  Thus $t_k\star xu_0=xu_1$ for some $u_1\in W_J$.  Applying this process inductively to $t_{k-1},\ldots, t_1$ yields that $q=q'\star(sq'')\in xW_J$.
\end{proof}

Let $y\in [e,w]\cap xW_J$ and consider a reduced factorization $$y=y_u\cdot y_v$$ where $y_u\leq u$ and $y_v\leq v$.  In particular, $y_u\in W_{S\setminus D_L(x)}$

\begin{lemma}\label{lemma:y_parts}
Let $y=y_u\cdot y_v$ as above. The following are true:
\begin{enumerate}
    \item $y_v\in xW_J$ and hence $sy_v\in sxW_J$
    \item $y_u\in W_{J'}$.
\end{enumerate}
\end{lemma}

\begin{proof}
To prove part (1), suppose that $y\in \tilde xW_J$ for some $\tilde x\in W^J$.  We will show that $\tilde x=x$.  Since $y\in xW_J$ and $y_v\leq y$, we must have $\tilde x\leq x$. Let $y_u=s_1\cdots s_k$ and $y_v=t_1\cdots t_{k'}$ be reduced decompositions.  Then $$y=(s_1\cdots s_k)(t_1\cdots t_{k'})$$ is also a reduced decomposition.  Since $x\leq y$, there exists a subword of $s_1\cdots s_kt_1\cdots t_{k'}$ which corresponds to a reduced word of $x$.  Since $s_i\notin D_L(x)$ for all $i\leq k$, we must have that $x\leq y_v$ and thus $x\leq \tilde x$.  Hence $x=\tilde x$.

For part (2), since $y_u\leq y$ and $S(y)\subseteq J\cup S(x)$, we have that $y_u\in W_{J\cup S(x)}$.  We also have that $y_u\leq u\in W_{S\setminus D_L(x)}$ and hence $S(y_u)\subseteq (J\cup S(x))\setminus D_L(x)$.  Let $t\in S(y_u)$.  Since $x\leq y_v$, we have that $tx\leq y$.  In particular, since $y\in xW_J$, we must have that $tx\in x'W_J$ for some $x'\in W^J$ with $x'\leq x$.  Now $t\notin D_L(x)$ which implies $tx>x$.  By Lemma \ref{lemma:generator_action_on_cosets}, either $tx\in xW_J$ or $tx\in W^J$.  If the latter holds, then there is a contradiction since $x'=tx$ and $tx>x$.  Thus we must have that $tx\in xW_J$ and $t\in J'$.   This implies $S(y_u)\subseteq J'$.
\end{proof}

Lemma \ref{lemma:y_parts} implies that $sy_v\leq q''$ and thus $y_v\leq sq''$ since $q''<sq''$.  It also implies $y_u\leq q'$.  By Proposition \ref{prop:interval_product}, we have $$y=y_uy_v\leq q'\star (sq'')=q.$$  Theorem \ref{thm:main} now follows from Lemma \ref{lemma:q_belongs}.

\end{proof}

We provide examples of the above construction of maximal elements $q$ in $[e,w]\cap xW_J$ for $w = s_1s_2s_3s_2s_1$ and $J = \{s_1,s_2\}$ given in Section \ref{section:example}.   First note that $$q=s_1\star s_2\star s_2\star s_1 =s_1s_2s_1$$ is the maximal element of $[e,w]\cap W_J$ (i.e when $x=e$).  This element is constructed using the Coxeter monoid product on the subsequence of the reduced word $(s_1,s_2,s_3,s_2,s_1)$ belonging to $J$.  For the other $x\in W^J$, the table below gives of choices of $u,v,J',q,q''$.
$$
\begin{tabular}{|c|c|c|c|c|}\hline
    $x$ &  $D_L(x)$  & $w=u\cdot v$ & $J'$  & $q = q'\star (s\cdot q'')$ \\ \hline
    $s_3$ & $\{s_3\}$ & $(s_1s_2)\cdot (s_3s_2s_1)$ & $\{s_1\}$  & $s_1\star (s_3\cdot s_2s_1)$\\ \hline
    $s_2s_3$ & $\{s_2\}$  & $(s_1s_3)\cdot (s_2s_3s_1)$ & $\{s_3\}$ & $s_3\star (s_2\cdot s_3s_1)$\\ \hline
    $s_1s_2s_3$ & $\{s_1\}$  & $(s_3s_2)\cdot (s_1s_2s_3)$ & $\{s_2, s_3\}$ & $s_3s_2\star (s_1\cdot s_2s_3)$\\ \hline    
\end{tabular}
$$


We can verify that for $w = s_1s_2s_3s_2s_1$ our construction of $q$ indeed gives the unique maximal of $[e,w] \cap xW_J$ by checking the maximal elements within each coset in  Figure~\ref{fig:4231decomp}. For instance, when $x = s_2s_3$ we get $q = s_3 \star s_2s_3s_1 = s_3s_2s_3s_1 = s_2s_3s_2s_1$, which is indeed the maximal element within the blue elements.

For another example, consider $w=s_2s_1s_4s_3s_2s_5s_4s_3\in S_6$ with $J=\{s_2,s_3, s_5\}$ and $x=s_4$.  We see that $D_L(x)=\{s_4\}$ and the left-sided parabolic composition of $w$ with respect to $S\setminus\{s_4\}$ is 
$$w=uv=(s_2s_1)(s_4s_3s_2s_5s_4s_3).$$
Here we have $J'=\{s_2,s_4,s_5\}$ and we pick $s=s_4$ (this is the only choice in this case).  The maximal element of $[e,w]\cap xW_J$ is 
$$q=q'\star(s\cdot q'')=(s_2)\star (s_4\cdot s_3s_2 s_5s_3)=s_4s_5s_3s_2s_3.$$

\subsection*{Acknowledgments} We would like to thank Thomas Lam for showing us related results on principal upper ideals.  We would also like to thank Christian Gaetz for pointing out connections with Coxeter matroids.

\bibliography{bruhat}

\begin{thebibliography}{10}

\bibitem{Alland-Richmod18}
{\sc T.~Alland and E.~Richmond}, {\em Pattern avoidance and fiber bundle
  structures on {S}chubert varieties}, J. Combin. Theory Ser. A, 154 (2018),
  pp.~533--550.

\bibitem{BP05}
{\sc S.~Billey and A.~Postnikov}, {\em Smoothness of {S}chubert varieties via
  patterns in root subsystems}, Adv. in Appl. Math., 34 (2005), pp.~447--466.

\bibitem{Billey-Fan-Losonczy99}
{\sc S.~C. Billey, C.~K. Fan, and J.~Losonczy}, {\em The parabolic map}, J.
  Algebra, 214 (1999), pp.~1--7.

\bibitem{Bjorner-Brenti05}
{\sc A.~Bj\"{o}rner and F.~Brenti}, {\em Combinatorics of {C}oxeter groups},
  vol.~231 of Graduate Texts in Mathematics, Springer, New York, 2005.

\bibitem{Caselli-Adderio-Marietti21}
{\sc F.~Caselli, M.~D'Adderio, and M.~Marietti}, {\em Weak generalized lifting
  property, {B}ruhat intervals, and {C}oxeter matroids}, Int. Math. Res. Not.
  IMRN,  (2021), pp.~1678--1698.

\bibitem{Gaetz-Gao20}
{\sc C.~Gaetz and Y.~Gao}, {\em Self-dual intervals in the {B}ruhat order},
  Selecta Math. (N.S.), 26 (2020), pp.~Paper No. 77, 23.

\bibitem{He09}
{\sc X.~He}, {\em A subalgebra of 0-{H}ecke algebra}, J. Algebra, 322 (2009),
  pp.~4030--4039.

\bibitem{Humphreys90}
{\sc J.~E. Humphreys}, {\em Reflection groups and {C}oxeter groups}, vol.~29 of
  Cambridge Studies in Advanced Mathematics, Cambridge University Press,
  Cambridge, 1990.

\bibitem{Kenny14}
{\sc T.~Kenney}, {\em Coxeter groups, {C}oxeter monoids and the {B}ruhat
  order}, J. Algebraic Combin., 39 (2014), pp.~719--731.

\bibitem{Knutson-Lam-Speyer14}
{\sc A.~Knutson, T.~Lam, and D.~E. Speyer}, {\em Projections of {R}ichardson
  varieties}, J. Reine Angew. Math., 687 (2014), pp.~133--157.

\bibitem{Kumar02}
{\sc S.~Kumar}, {\em Kac-{M}oody groups, their flag varieties and
  representation theory}, vol.~204 of Progress in Mathematics, Birkh\"{a}user
  Boston, Inc., Boston, MA, 2002.

\bibitem{MOY19}
{\sc R.~Mcalmon, S.~Oh, and H.~Yoo}, {\em Palindromic intervals in bruhat order
  and hyperplane arrangements}, preprint, arXiv:1904.1104,  (2019).

\bibitem{No79}
{\sc P.~N. Norton}, {\em {$0$}-{H}ecke algebras}, J. Austral. Math. Soc. Ser.
  A, 27 (1979), pp.~337--357.

\bibitem{Richmond-Slofstra16}
{\sc E.~Richmond and W.~Slofstra}, {\em Billey-{P}ostnikov decompositions and
  the fibre bundle structure of {S}chubert varieties}, Math. Ann., 366 (2016),
  pp.~31--55.

\bibitem{Richmond-Slofstra17}
\leavevmode\vrule height 2pt depth -1.6pt width 23pt, {\em Staircase diagrams
  and enumeration of smooth {S}chubert varieties}, J. Combin. Theory Ser. A,
  150 (2017), pp.~328--376.

\bibitem{St07}
{\sc J.~R. Stembridge}, {\em A short derivation of the {M}\"{o}bius function
  for the {B}ruhat order}, J. Algebraic Combin., 25 (2007), pp.~141--148.

\bibitem{Tsukerman-Williams15}
{\sc E.~Tsukerman and L.~Williams}, {\em Bruhat interval polytopes}, Adv.
  Math., 285 (2015), pp.~766--810.

\bibitem{vdHombergh74}
{\sc A.~van~den Hombergh}, {\em About the automorphisms of the
  {B}ruhat-ordering in a {C}oxeter group}, Nederl. Akad. Wetensch. Proc. Ser. A
  {\bf 77} Indag. Math., 36 (1974), pp.~125--131.

\end{thebibliography}
\bibliographystyle{siam}

\end{document}